\documentclass[reqno,12pt]{amsart}

\usepackage{epsf}
\usepackage{graphics}
\usepackage{graphicx}
\usepackage{amssymb}
\usepackage{amsmath}

\date{}

\theoremstyle{plain}
\newtheorem{theorem}{Theorem}

\newtheorem{proposition}{Proposition}
\newtheorem{lemma}{Lemma}
\newtheorem{rem}{Remark}

\theoremstyle{definition}

\theoremstyle{remark}

\newtheorem*{remark}{Remark}

\def\N{{\mathbb N}}
\def\Z{{\mathbb Z}}
\def\Q{{\mathbb Q}}
\def\R{{\mathbb R}}
\def\H{{\mathbb H}}

\title{On the crossing number of arithmetic curve systems}

\author{Sebastian Baader, Claire Burrin, Luca Studer}

%\dedicatory{}

\begin{document}

\begin{abstract}
We show that the family of systoles of hyperbolic surfaces associated with congruence lattices in $\text{SL}_2(\Z)$ have asymptotically minimal crossing number.
\end{abstract}

%\dedicatory{}

\maketitle

\section{Introduction}

A famous question in surface theory asks for the maximal number of systoles among all closed hyperbolic surfaces of fixed genus~$g$. The known examples with the largest number of systoles are due to Schmutz-Schaller~\cite{Sch} and are associated with congruence lattices $\Gamma(N) \triangleleft \text{SL}_2(\Z)$. The lattices $\Gamma(N)$, indexed by a natural number $N$, give rise to a family of surfaces $X(N)=\Gamma(N) \backslash \H$ of genus about $N^3$. The number of systoles of $X(N)$ is significantly higher than the genus, namely about $N^4=g^{4/3}$. It is not known whether these surfaces are maximal with respect to the number of systoles. Indeed, the best upper bound on the number of systoles on a closed hyperbolic surface of genus~$g$ is $g^2/\log(g)$, as derived by Parlier in~\cite{Pa}. The purpose of this note is to show that the systems of systoles $\mathcal{C}(N)$ of the surfaces $X(N)$ are asymptotically optimal in terms of the crossing number.

Let $\mathcal{C}=\{c_1,\ldots,c_n\}$ be a family of pairwise non-homotopic simple closed curves on a surface $\Sigma$. We define the crossing number of the family $\mathcal{C}$ as the total number of pairwise intersections,
$$\text{cr}(\mathcal{C})=\sum_{k<l} i(c_k,c_l),$$
where $i(c_k,c_l)$ denotes the usual geometric intersection number of the curves $c_k$ and $c_l$, as defined in Chapter~1 of~\cite{FM}.  

\begin{theorem}
\label{crossingnumber}
The family of systoles $\mathcal{C}(N)$ of the surfaces $X(N)$ satisfies
$$\liminf_{N \to \infty} \frac{\log(\text{cr}(\mathcal{C}(N)))}{\log(N)}=5.$$
Moreover, every other family $\mathcal{D}(N)$ of pairwise non-homotopic simple closed curve systems in the same surface $X(N)$ with $|\mathcal{D}(N)|=|\mathcal{C}(N)|$ satisfies
$$\liminf_{N \to \infty} \frac{\log(\text{cr}(\mathcal{D}(N)))}{\log(N)} \geq 5.$$
\end{theorem}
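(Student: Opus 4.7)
The plan is to split the statement into a matching upper and lower bound: an upper bound on $\mathrm{cr}(\mathcal{C}(N))$ using the hyperbolic geometry of $X(N)$, and a combinatorial lower bound valid for every family of $|\mathcal{C}(N)|$ pairwise non-homotopic simple closed curves on $X(N)$, so that the systole family automatically realises the lower bound.

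For the upper bound I would first record three size invariants of $X(N)$: area $A(N) = -2\pi\chi(X(N)) \sim N^3$ via Gauss--Bonnet and $[\mathrm{SL}_2(\Z):\Gamma(N)] \sim N^3$; systole length $\mathrm{sys}(X(N)) \sim 2\log N$, since every hyperbolic element of $\Gamma(N)$ has trace of the form $2+Nt$ with $t\in\Z\setminus\{0\}$ and translation length $2\cosh^{-1}(|\mathrm{tr}|/2)$; and systole count $|\mathcal{C}(N)| \sim N^4$ by Schmutz--Schaller \cite{Sch}. Hence the total systole length is $L(N) := \sum_{c\in\mathcal{C}(N)}\ell(c) \sim N^4\log N$. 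The key analytic input is a length--area estimate of the form
\[
\mathrm{cr}(\mathcal{C}) = \sum_{k<l}i(c_k,c_l) \leq C\,\frac{L^2}{A}
\]
for a family of closed geodesics of total length $L$ on a compact hyperbolic surface of area $A$, coming from Crofton/Santal\'o integral geometry (two random geodesics of lengths $\ell_1,\ell_2$ on $X$ intersect about $\ell_1\ell_2/(\pi A)$ times) or, equivalently, from Bonahon's integral formula for the intersection form of geodesic currents. Applied to $\mathcal{C}(N)$ this gives $\mathrm{cr}(\mathcal{C}(N)) \lesssim N^5\log^2 N$, and therefore $\liminf_{N\to\infty}\log(\mathrm{cr}(\mathcal{C}(N)))/\log N \leq 5$.

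For the lower bound, given an arbitrary family $\mathcal{D}(N) = \{d_1,\ldots,d_n\}$ with $n = |\mathcal{C}(N)| \sim N^4$, I form the intersection graph $G$ on $\mathcal{D}(N)$ whose edges are the pairs with $i(d_k,d_l) \geq 1$. An independent set in $G$ is a pairwise disjoint non-homotopic family of simple closed curves on $X(N)$, hence of cardinality at most $3g(X(N)) - 3 \sim 3N^3$. Applying Tur\'an's theorem to the complement of $G$ yields
\[
|E(G)| \geq \frac{n^2}{2\alpha(G)} - \frac{n}{2} \gtrsim N^5,
\]
and since $\mathrm{cr}(\mathcal{D}(N)) \geq |E(G)|$ we obtain the matching $\liminf \log(\mathrm{cr}(\mathcal{D}(N)))/\log N \geq 5$. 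Combining with the upper bound above, the $\liminf$ for the systole family is exactly $5$.

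The main obstacle is turning the length--area bound into a deterministic estimate for the specific family $\mathcal{C}(N)$. The pointwise form $i(\alpha,\beta) \lesssim \ell(\alpha)\ell(\beta)/A$ can degenerate in the presence of a very short geodesic, but the systoles of $X(N)$ are all of the same length $\sim 2\log N$, and on the arithmetic surface $X(N)$ one can either invoke equidistribution of $\mathcal{C}(N)$ in $T^1 X(N)$ with respect to Liouville measure as $N\to\infty$, or exploit the transitive action of $\mathrm{SL}_2(\Z/N\Z)$ on $\mathcal{C}(N)$ to reduce the estimate to averages. Either route demands that the constant $C$ be controlled uniformly in $N$. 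The remaining ingredients --- systole length, count, area and the Tur\'an step --- are either classical or entirely elementary.
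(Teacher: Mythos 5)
Your lower bound argument is correct, and it is a genuine (if mild) variant of what the paper does. The paper's Proposition~\ref{doublepoints} proves the same statement by hand: discard the at most $2g^{1+\alpha}$ curves meeting at least $g^{\alpha}$ others, take a maximal disjoint subfamily of the remainder (of size at most $3g-3$), and count. Your route --- intersection graph, independence number at most $3g-3$, Tur\'an's bound $|E(G)|\geq \tfrac{n^2}{2\alpha(G)}-\tfrac{n}{2}$ --- packages the same idea into a standard combinatorial lemma and gives the same order $n^2/g \sim N^5$; both arguments hinge on exactly the same topological input, namely that a genus-$g$ surface carries at most $3g-3$ disjoint pairwise non-homotopic simple closed curves. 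One detail to repair: $X(N)$ is not closed (it has about $N^2$ cusps), so before quoting $3g-3$ you must either cap off the cusps with tori, as the paper does, or use the corresponding bound for punctured surfaces; either way the independence number remains $O(N^3)$ and nothing in your computation changes.

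The upper bound, however, has a fatal gap: the ``key analytic input'' $\mathrm{cr}(\mathcal{C})\leq C L^2/A$ is false for general families of closed geodesics on hyperbolic surfaces. Take two closed geodesics of bounded length crossing once inside a one-holed torus with geodesic boundary, glued into a surface of arbitrarily large area: then $\mathrm{cr}=1$ while $L^2/A\to 0$, so no universal constant $C$ exists. Crofton--Santal\'o and Bonahon's formula compute intersections \emph{with the Liouville current} (averages over all geodesics); they give no deterministic bound for a specific family, and on $X(N)$ the heuristic value $\ell^2/A\approx (\log N)^2/N^3\ll 1$ per pair shows that what you actually need is that almost all of the $\sim N^8$ pairs of systoles are disjoint --- a genuinely arithmetic statement that neither equidistribution on $T^1X(N)$ (which would have to be made uniform over a varying sequence of surfaces) nor symmetry hands you. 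Your second fallback, reducing by the $\mathrm{SL}_2(\Z/N\Z)$-action (which, note, is not transitive: there are $h(N^2-4)$ orbits), is morally the paper's first step: the Schmutz--Schaller bijection $B\mapsto I-NB$ preserves fixed points, hence axes, and transfers intersecting pairs of systoles on $X(N)$ to intersecting pairs of modular geodesics of trace $N$ on the \emph{fixed} modular surface, at the cost of a factor $[\Gamma(1):\Gamma(N)]\approx N^3$. But the remaining count is precisely the theorem of Jung--Sardari: modular geodesics of discriminant $D=N^2-4$ have $\asymp (h(D)\log\epsilon_D)^2$ intersections, which by the class number formula, $\epsilon_D\asymp\sqrt{D}$ (Richaud--Degert), and $L(\chi_D,1)\ll\log D$ is $\ll (N\log N)^2$, giving $\mathrm{cr}(\mathcal{C}(N))\ll N^5(\log N)^2$ along the infinitely many $N$ with $N^2-4$ squarefree. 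Your heuristic predicts the right exponent, but the proof requires this arithmetic input, which your proposal has in effect assumed. (A separate small slip: hyperbolic elements of $\Gamma(N)$ have trace $2-N^2\det B$, so the systoles have trace $N^2-2$ and length $\sim 4\log N$, not $\sim 2\log N$; this affects no exponent.)
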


As mentioned above, the genus $g(X(N))$ and the number of systoles $|\mathcal{C}(N)|$ are about $N^3$ and $N^4$, respectively. Moreover, the surfaces $X(N)$ have about $N^2$ cusps. If we cap off these cusps with tori, we obtain systems of pairwise non-homotopic simple closed curves on closed topological surfaces whose genus is still about $N^3$. Therefore, the second statement of Theorem~\ref{crossingnumber} is equivalent to: all curve systems with about $g^{4/3}$ curves on a closed surface of genus $g$ have crossing number at least $g^{5/3}$. In the next section, we prove a generalisation of this statement, where $g^{4/3}$ and $g^{5/3}$ are replaced by $g^{1+\alpha}$ and $g^{1+2\alpha}$, respectively. This is a purely topological fact. In contrast, the determination of the crossing number $\text{cr}(\mathcal{C}(N))$, presented in Section~3, is of arithmetic nature. We conclude the note with a discussion on the crossing number of curve systems with $g^{1+\alpha}$ curves, for general $\alpha \leq 1/3$.

\section*{Acknowledgments}
We would like to thank Hugo Parlier and Jasmin J\"org for sharing ideas on curve systems --- in particular Hugo for coining the expression `crossing number' for curve systems --- and James Rickards for discussions on explicit formulas for intersection numbers.

\section{Lower bound on the crossing number}

In this short section, we derive an upper bound of order $g^{1+\alpha}$ on the number of simple closed curves with a total number of $g^{1+2\alpha}$ double points, on the standard closed surface $\Sigma_g$ of genus~$g$.

\begin{proposition}
\label{doublepoints}
Let $\alpha \geq 0$, and let $\mathcal{C}_g$ be a sequence of systems of pairwise non-homotopic simple closed curves on the closed surface of genus~$g$, with crossing number bounded above by $g^{1+2\alpha}$. Then the number of curves $N(g)=|\mathcal{C}_g|$ satisfies
$$\limsup_{g \to \infty} \frac{N(g)}{g^{1+\alpha}} \leq 6.$$
\end{proposition}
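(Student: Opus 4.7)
The strategy is to reduce this topological statement to a Tur\'an-type estimate on a suitably defined auxiliary graph. Introduce the \emph{intersection graph} $H$ on vertex set $\mathcal{C}_g$ by joining two curves with an edge whenever their geometric intersection number is nonzero. Since every edge of $H$ accounts for at least one crossing, $|E(H)| \leq \mathrm{cr}(\mathcal{C}_g) \leq g^{1+2\alpha}$.

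An independent set in $H$ corresponds to a collection of pairwise disjoint, pairwise non-homotopic simple closed curves on $\Sigma_g$. The pants-decomposition ceiling (allowing at most one null-homotopic curve) gives $\alpha(H) \leq 3g - 2 \leq 3g$ for $g \geq 2$, which we may assume while computing a $\limsup$ as $g \to \infty$. On the other hand, the Caro--Wei inequality combined with Cauchy--Schwarz provides a matching lower bound:
\[
\alpha(H) \;\geq\; \sum_{v \in \mathcal{C}_g} \frac{1}{1 + d_H(v)} \;\geq\; \frac{N(g)^2}{N(g) + 2|E(H)|} \;\geq\; \frac{N(g)^2}{N(g) + 2g^{1+2\alpha}}.
\]

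Combining the two estimates yields $N(g)^2 \leq 3g\bigl(N(g) + 2g^{1+2\alpha}\bigr)$, which is a quadratic inequality in $N(g)$. Solving it and using $\sqrt{a^2 + b^2} \leq a + b$ for $a, b \geq 0$,
\[
N(g) \;\leq\; \tfrac{1}{2}\Bigl(3g + \sqrt{9g^2 + 24\, g^{2+2\alpha}}\Bigr) \;\leq\; 3g + \sqrt{6}\, g^{1+\alpha}.
\]
Dividing by $g^{1+\alpha}$ and passing to the limit supremum gives a leading constant of $3 + \sqrt{6}$ when $\alpha = 0$ and $\sqrt{6}$ when $\alpha > 0$; both are smaller than $6$, so the claim follows (with room to spare).

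The only non-routine step is the initial reduction: recognising that ``small total crossing number'' should be read as ``sparse intersection graph'' and that this sparsity can be coupled with the pants-decomposition bound on disjoint non-homotopic families via a Tur\'an-type inequality. After this, the argument is a short computation, which is why the proposition holds with a constant well below the stated $6$.
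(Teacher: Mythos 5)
Your proof is correct, and its combinatorial engine is genuinely different from the paper's. Both arguments rest on the same topological anchor---a family of disjoint, pairwise non-homotopic simple closed curves on $\Sigma_g$ has at most $3g-3$ essential members (your allowance of one extra null-homotopic curve is in fact more careful than the paper)---which caps the independence number of your intersection graph $H$ by roughly $3g$. The paper then proceeds by a hands-on truncation: since $\mathrm{cr}(\mathcal{C}_g)\leq g^{1+2\alpha}$, at most $2g^{1+\alpha}$ curves carry $g^{\alpha}$ or more crossings; among the remaining curves $\widetilde{\mathcal{C}}_g$, a maximal disjoint subfamily (at most $3g-3$ curves) dominates, and each of its members meets at most $g^{\alpha}$ others, so $|\widetilde{\mathcal{C}}_g|\leq 3g+3g^{1+\alpha}$, contradicting the assumption $N(g)\geq 6g^{1+\alpha}$. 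You instead couple sparsity with the independence bound via Caro--Wei and Cauchy--Schwarz (equivalently, the Tur\'an-type bound that the independence number is at least $N(g)^2/(N(g)+2|E(H)|)$) and solve the resulting quadratic. Your route buys cleaner bookkeeping---no truncation threshold to tune, no case split in the argument itself---and strictly better constants: $\sqrt{6}$ for $\alpha>0$ and $3+\sqrt{6}$ (or $(3+\sqrt{33})/2$ if one solves the quadratic exactly) for $\alpha=0$, showing the proposition's constant $6$ is not sharp. The paper's route buys complete self-containment: it uses nothing beyond counting and the pants bound, and for $\alpha=0$ its direct count (maximal disjoint family plus one curve per crossing) gives the still better constant $4$. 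One cosmetic remark: you use $\alpha$ both for the exponent and (implicitly, via the notation $\alpha(H)$) for the independence number; in a write-up you should rename one of them.
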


The special case $\alpha=1/3$ implies that every curve system with about $g^{4/3}$ curves on $\Sigma_g$ has crossing number at least $g^{5/3}$. This is the second statement of Theorem~\ref{crossingnumber}, formulated in terms of $g=N^3$.

The proof of Proposition~\ref{doublepoints} relies on the fact that a closed surface of genus~$g$ contains at most $3g-3$ pairwise non-homotopic disjoint simple closed curves. In fact, the special case $\alpha=0$ is a direct consequence of this: every curve system on $\Sigma_g$ with a total of at most $g$ double points has at most $4g-3$ curves. In this case, the upper bound~$6$ in the proposition can be replaced by~$4$.

\begin{proof}[Proof of Proposition~\ref{doublepoints}]
We fix $\alpha>0$, as the case $\alpha=0$ is already settled. 
Let $\mathcal{C}_g$ be a sequence of systems of pairwise non-homotopic simple closed curves on a closed surface of genus~$g$, with crossing number bounded above by $g^{1+2\alpha}$. Assume that the number of curves $N(g)=|\mathcal{C}_g|$ satisfies
$$N(g) \geq 6 g^{1+\alpha},$$
for infinitely many $g \in \N$.
We will show that this leads to a contradiction.
First, the number of curves in $\mathcal{C}_g$ with at least
$g^{\alpha}$ double points is at most
$$\frac{2g^{1+2\alpha}}{g^{\alpha}}=2g^{1+\alpha},$$
since every double point involves precisely two curves.
This leaves us with a subset $\widetilde{\mathcal{C}}_g \subset \mathcal{C}_g$ of at least $(6-2)g^{1+\alpha}=4g^{1+\alpha}$ curves, all of which have at most $g^{\alpha}$ double points. Now comes the elegant conclusion.

Choose a maximal set of pairwise disjoint curves among the system $\widetilde{\mathcal{C}}_g$. Call these curves $c_1,c_2,\ldots,c_n$; observe $n \leq 3g-3$. Now every other curve of the system $\widetilde{\mathcal{C}}_g$ intersects at least one of the curves $c_i$. Keeping in mind that every curve $c_i$ intersects at most $g^{\alpha}$ other curves of the system $\widetilde{\mathcal{C}}_g$, we deduce that $\widetilde{\mathcal{C}}_g$ has a total of at most
$$3g-3+(3g-3) \cdot g^{\alpha} \leq 3g+3g^{1+\alpha}$$
curves. On the other hand, by construction, $\widetilde{\mathcal{C}}_g$ has at least
$4g^{1+\alpha}$ curves. With this contradiction, for large $g$, we conclude the proof of Proposition~\ref{doublepoints}.
\end{proof}

\section{Counting intersections of systoles}

Let $N>2$. There is a one-to-one correspondence between closed geodesics on the surface $X(N)$ and conjugacy classes of hyperbolic elements $A\in \Gamma(N)$. The length $\ell$ of the closed geodesic associated to the conjugacy class of $A\in \Gamma(N)$ is given by the identity
$$
2\cosh(\ell/2) = |{\rm tr}(A)|.
$$ 
Each element of $\Gamma(N)$ is a matrix $A\in \mathrm{SL}_2(\Z)$ such that $A\equiv I$ (mod $N$). As such it can be written as $A=I-NB$ for some matrix $B\in M_2(\Z)$. A direct computation shows that the condition $\det(A)=1$ is equivalent to ${\rm tr}(B)=N\det(B)$. Since 
$$
{\rm tr}(A) = 2-N\cdot {\rm tr}(B) = 2-N^2 \det(B)
$$ 
we see that if $A$ is hyperbolic, $|{\rm tr}(A)|$ is minimized when $B\in\mathrm{SL}_2(\Z)$. We conclude that systoles on $X(N)$ correspond to conjugacy classes of elements $A\in \Gamma(N)$ for which $|{\rm tr}(A)|=N^2-2$. 

This observation is the starting point of the Schmutz-Schaller correspondence \cite{Sch} that we now review. From the discussion above, we have a bijection 
\begin{align*}
     \{B\in \Gamma(1): {\rm tr}
(B)=N \} \overset{\Phi}{\longrightarrow} \{ A \in \Gamma(N): {\rm tr}
(A)=2-N^2\}
\end{align*}
given by $\Phi(B)=I-NB$. By Cayley--Hamilton, we may write this map as $\Phi(B)=-B^2$. The map $\Phi$ commutes with the action of $\Gamma(1)$ by conjugation, inducing a one-to-one correspondence between modular geodesics (closed geodesics on $X(1)$) of trace $N$ and isometry classes (under $\Gamma(1)$) of systoles on $X(N)$. There is another classical one-to-one correspondence, going back to Gauss, between modular geodesics and binary integral quadratic forms of positive discriminant. To be more precise, under the latter, the number of primitive modular geodesics of trace $N$ is equal to the class number $h(N^2-4)$, that is the number of (narrow) classes of primitive binary integral quadratic forms of discriminant $D=N^2-4$, or equivalently, is equal to twice the number of ideal classes in the ring of integers of the real quadratic field $\Q(\sqrt{D})$. Hence, up to a multiplicative constant, we have
\begin{align}\label{SS}
|\mathcal{C}(N)| = [\Gamma(1):\Gamma(N)]\cdot h(N^2-4).
\end{align}
The index of $\Gamma(N)$ in $\Gamma(1)$ is roughly of order $N^3$. More precisely,
\begin{lemma}
$$\liminf_{N\to\infty}\, \frac{\log[\Gamma(1):\Gamma(N)]}{\log N} = 3.$$
\end{lemma}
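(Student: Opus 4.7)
The plan is to reduce the statement to the classical index formula for the principal congruence subgroup and then control the resulting Euler factor.

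First, I would recall that reduction modulo $N$ gives a surjective homomorphism $\mathrm{SL}_2(\Z) \twoheadrightarrow \mathrm{SL}_2(\Z/N\Z)$ whose kernel is by definition $\Gamma(N)$. Hence
$$[\Gamma(1):\Gamma(N)] = |\mathrm{SL}_2(\Z/N\Z)|.$$
Surjectivity is standard: any matrix in $\mathrm{SL}_2(\Z/N\Z)$ lifts, since one can adjust its first column to a coprime pair in $\Z^2$ using the Chinese Remainder Theorem and then complete it to a unimodular matrix.

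Next, I would factor the right-hand side via CRT. Writing $N = \prod_p p^{e_p}$, one has
$$|\mathrm{SL}_2(\Z/N\Z)| = \prod_{p | N} |\mathrm{SL}_2(\Z/p^{e_p}\Z)|,$$
and a direct count (for instance, via the reduction $\mathrm{SL}_2(\Z/p^e\Z) \to \mathrm{SL}_2(\Z/p\Z)$ whose kernel has order $p^{3(e-1)}$, combined with $|\mathrm{SL}_2(\F_p)| = p(p^2-1)$) yields $|\mathrm{SL}_2(\Z/p^e\Z)| = p^{3e-2}(p^2-1)$. Multiplying over primes dividing $N$ gives the classical formula
$$[\Gamma(1):\Gamma(N)] = N^3 \prod_{p|N}\Big(1-\tfrac{1}{p^2}\Big).$$

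From here the lemma is immediate. The upper bound $[\Gamma(1):\Gamma(N)] \leq N^3$ gives $\log[\Gamma(1):\Gamma(N)]/\log N \leq 3$, so the $\liminf$ is at most $3$. For the matching lower bound, I would invoke convergence of the Euler product for $\zeta(2)^{-1}$: since $\prod_p(1-p^{-2}) = 6/\pi^2 > 0$, we have the uniform bound $\prod_{p|N}(1-p^{-2}) \geq 6/\pi^2$ for every $N$. Taking logarithms,
$$\frac{\log[\Gamma(1):\Gamma(N)]}{\log N} \geq 3 + \frac{\log(6/\pi^2)}{\log N} \xrightarrow[N\to\infty]{} 3,$$
which closes the argument. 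I do not anticipate any real obstacle: the entire lemma is essentially the observation that the Euler correction factor in the index formula is bounded away from $0$, so it contributes nothing to the logarithmic asymptotic.
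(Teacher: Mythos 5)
Your proof is correct, and while it shares the same skeleton as the paper's argument --- the exact formula $[\Gamma(1):\Gamma(N)] = N^3\prod_{p\mid N}(1-p^{-2})$, the trivial upper bound $N^3$, and then showing the Euler factor is logarithmically negligible --- the key estimate is genuinely different. The paper bounds the error multiplicatively prime by prime: it writes $-\sum_{p\mid N}\log(1-p^{-2}) \leq \log(4/3)\,\omega(N)$ and then invokes the nontrivial number-theoretic bound $\omega(N)\ll \log N/\log\log N$, giving an error of size $O(\log N/\log\log N)$ in the logarithm of the index. You instead observe that, since every factor $(1-p^{-2})$ lies in $(0,1)$, the partial product over $p \mid N$ dominates the full Euler product, so $\prod_{p\mid N}(1-p^{-2}) \geq \prod_{p}(1-p^{-2}) = \zeta(2)^{-1} = 6/\pi^2$, a uniform constant. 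This is both more elementary (it requires only the convergence of $\zeta(2)$, not any estimate on $\omega(N)$, which itself rests on Chebyshev-type bounds) and sharper: your error term in the ratio is $O(1/\log N)$ rather than the paper's $O(1/\log\log N)$. Both arguments in fact prove that the $\liminf$ is a genuine limit. A minor additional difference is that you derive the index formula from scratch (surjectivity of reduction, CRT, and the count $|\mathrm{SL}_2(\Z/p^e\Z)| = p^{3e-2}(p^2-1)$), whereas the paper simply cites it; that part of your write-up is standard and correct, though strictly speaking optional.
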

\begin{proof} 
    We have the exact formula $[\Gamma(1):\Gamma(N)] = N^3 \prod_{p\mid N} (1-p^{-2})$. Trivially, the index is bounded above by $N^3$. On the other hand taking the logarithm of this expression gives
    $$
    3\log N + \sum_{p\mid N} \log(1-p^{-2}) =  3\log N - \sum_{p\mid N} \sum_{r\geq 1} \frac{1}{rp^{2r}}.
    $$
    The double sum is bounded above by
    $
     \sum_{r=1}^\infty \frac{2^{-2r}}{r} \omega(N) = \log(4/3) \omega(N),
    $
    where $\omega(N)$ is the number of distinct prime divisors of $N$. We can now apply the standard bound $\omega(N)\ll \tfrac{\log N}{\log\log N}$.
    \end{proof}

Schmutz-Schaller uses the identity (\ref{SS}) to show that $|\mathcal{C}(N)|$, the number of systoles on $X(N)$, is roughly of order $N^4$. We note the following refinement of his result.

\begin{proposition}
Assuming that the Generalized Riemann Hypothesis (GRH) is true, we have
$$
\frac{N}{\log N}(\log\log N)^{-1} \ll \frac{|\mathcal{C}(N)|}{g(X(N))} \ll \frac{N}{\log N}\log\log N
$$
for all $N$ such that $N^2-4$ is squarefree.
\end{proposition}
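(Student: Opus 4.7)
The plan is to reduce the statement to a sharp estimate on the class number $h(N^2-4)$ and then combine Dirichlet's analytic class number formula with Littlewood's GRH-conditional bounds on $L(1,\chi)$.

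First, I would combine the Schmutz--Schaller identity~(\ref{SS}) with the standard genus formula for $X(N)$. For $N\geq 3$ the group $\Gamma(N)$ is torsion-free and $X(N)$ has $[\Gamma(1):\Gamma(N)]/(2N)$ cusps, so Riemann--Hurwitz gives
$$g(X(N)) = 1 + \frac{[\Gamma(1):\Gamma(N)]}{24} - \frac{[\Gamma(1):\Gamma(N)]}{4N} \asymp [\Gamma(1):\Gamma(N)].$$
Dividing (\ref{SS}) through, the index cancels and one obtains $|\mathcal{C}(N)|/g(X(N)) \asymp h(N^2-4)$, reducing the proposition to the single estimate
$$\frac{N}{\log N \cdot \log\log N} \ll h(N^2-4) \ll \frac{N \log\log N}{\log N}$$
valid for $N^2-4$ squarefree.

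Next, I would apply the analytic class number formula to the real quadratic field $\Q(\sqrt{D})$ with $D=N^2-4$,
$$h(D) = \frac{\sqrt{D}\, L(1,\chi_D)}{\log \epsilon_D},$$
where $\chi_D$ is the Kronecker symbol and $\epsilon_D>1$ is the fundamental unit. The factor $\sqrt{D}\asymp N$ supplies the leading $N$, and under GRH Littlewood's bounds
$$(\log\log D)^{-1} \ll L(1,\chi_D) \ll \log\log D$$
contribute the factor $\log\log N$ in each direction.

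The remaining, and arithmetically most delicate, step is the regulator estimate $\log\epsilon_D \sim \log N$. Since $N^2-4$ is squarefree we have $N$ odd, $D\equiv 1\pmod{4}$, and $\mathcal{O}_D = \Z[(1+\sqrt{D})/2]$. Units take the form $(\alpha+\beta\sqrt{D})/2$ with $\alpha\equiv\beta\pmod 2$ and $\alpha^2-D\beta^2 = \pm 4$, and the fundamental unit is the one minimising $\beta\geq 1$. The choice $\beta=1$ forces $\alpha^2\in\{N^2,\, N^2-8\}$, and $\alpha^2=N^2-8$ has no integer solution for $N>3$; hence $\epsilon_D = (N+\sqrt{N^2-4})/2$ and $\log\epsilon_D \sim \log N$. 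Combining the three ingredients then yields the claimed upper and lower bounds on $h(N^2-4)$. The main obstacle is this uniform identification of the fundamental unit, which is the only step that actually uses the specific form $D=N^2-4$ beyond the general class number machinery; fortunately it reduces to an elementary Pell-equation argument.
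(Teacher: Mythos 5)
Your proposal is correct and follows essentially the same route as the paper: reduce via the exact genus formula and the Schmutz--Schaller identity to estimating $h(N^2-4)$, then combine Dirichlet's class number formula with the GRH (Littlewood) bounds $(\log\log D)^{-1} \ll L(1,\chi_D) \ll \log\log D$ and the evaluation $\epsilon_D = (N+\sqrt{D})/2 \asymp \sqrt{D}$ of the fundamental unit. The only difference is that you supply an elementary Pell-equation argument for the fundamental-unit identification, whereas the paper simply cites the fact that discriminants of Richaud--Degert type have small regulator; this is a nice self-contained touch but not a different proof.
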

\begin{proof}
    We first recall the exact formula
$$
g(X(N)) = 1 + [\Gamma(1):\Gamma(N)] \left(\frac{1}{24}- \frac{1}{4N}\right)
$$
when $N>2$. Then
    $$
    \frac{|\mathcal{C}(N)|}{g(X(N))} = \frac{|\mathcal{C}(N)|}{[\Gamma(1):\Gamma(N)]} \left(\frac{1}{24}+O(N^{-1})\right) = h(N^2-4)(1+O(N^{-1}))
    $$
    up to a multiplicative constant. We may assume that $N$ is chosen such that $D=N^2-4$ is squarefree.  Dirichlet's class number formula states that 
\begin{align}\label{Dirichlet}
h(D)\log\epsilon _D = \sqrt{D} L(\chi_D,1),
\end{align}
where $\chi_D$ is the Kronecker symbol modulo $D$, $L(\chi_D,1)$ is the special $L$-value $L(\chi_D,1)=\sum_{n=1}^\infty \tfrac{\chi_D(n)}{n}$ and $\epsilon_D$ is the fundamental unit of the ring of integers of the real quadratic field $\Q(\sqrt{D})$. Most progress on estimating $h(D)$ has been achieved by estimating the size of $L(\chi_D,1)$ by analytic means. The best known bounds are
\begin{align}\label{lower/upper bounds}
D^{-\theta}\ll_\theta |L(\chi_D,1)| \ll \log D
\end{align}
and upon assuming GRH, these bounds can be sharpened to 
$$
(\log\log D)^{-1}\ll |L(\chi_D,1)| \ll \log \log D.
$$

We do not know in general how to separate the two terms on the left on (\ref{Dirichlet}). However for fields of discriminant of Richaud--Degert type, such as $D=N^2-4$, the fundamental unit is known to be small, and in fact can be expressed as $\epsilon_D= \frac{N+\sqrt{D}}{2}\asymp \sqrt{D}$. The result follows by collecting these estimates.
\end{proof}

We extend the Schmutz-Schaller correspondence to deduce the following upper bound on the crossing number from the recent work of Jung and Sardari \cite{JungSardari2022} on  intersections of modular geodesics.

\begin{proposition}
For $N$ sufficiently large and such that $N^2-4$ is squarefree, we have ${\rm cr}(\mathcal{C}(N))\ll [\Gamma(1):\Gamma(N)] (N\log N)^2$.
\end{proposition}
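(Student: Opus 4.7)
The plan is to pull back the intersection count of the systoles on $X(N)$ to an intersection count of modular geodesics on $X(1)$, and then apply the Jung--Sardari bound~\cite{JungSardari2022}. Let $\pi\colon X(N)\to X(1)$ be the natural covering map, of degree proportional to $d=[\Gamma(1):\Gamma(N)]$, and let $\mathcal{M}(N)$ denote the family of modular geodesics of trace~$N$ on $X(1)$. The squarefree hypothesis on $N^2-4$ ensures that every element of $\mathcal{M}(N)$ is primitive. By the Schmutz--Schaller correspondence reviewed above, each systole of $X(N)$ is a connected component of some lift $\pi^{-1}(\gamma)$ with $\gamma\in\mathcal{M}(N)$, and every such component is a systole.

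The first step is to establish the identity
$$\mathrm{cr}(\mathcal{C}(N)) \;\asymp\; d\cdot I(\mathcal{M}(N)),$$
where $I(\mathcal{M}(N))=\sum_{\{\gamma_1,\gamma_2\}\subset \mathcal{M}(N)} i(\gamma_1,\gamma_2)+\sum_{\gamma\in\mathcal{M}(N)} s(\gamma)$ is the total geometric intersection count of $\mathcal{M}(N)$ on $X(1)$, including the self-intersections $s(\gamma)$ of each $\gamma$. Each transverse intersection point on $X(1)$ has the same number of preimages in $X(N)$ as a generic point (away from the measure-zero elliptic orbifold points of $X(1)$). The key observation is that each such preimage is a transverse intersection of two \emph{distinct} systoles: two branches of a self-intersection of $\gamma$ cannot lift into the same component of $\pi^{-1}(\gamma)$, because that component is a simple closed curve; and two branches of an intersection between $\gamma_1\neq\gamma_2$ lift to components over different modular geodesics. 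Conversely, every intersection of two distinct systoles on $X(N)$ projects to some intersection point of $\mathcal{M}(N)$ on $X(1)$.

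Given this identity, the proposition reduces to the purely geometric estimate $I(\mathcal{M}(N))\ll (N\log N)^2$, which is exactly what~\cite{JungSardari2022} provides. This is the expected order of magnitude: using Dirichlet's class number formula together with the bound $\log\epsilon_{N^2-4}\asymp \log N$ already employed in the proof of the previous proposition, one has $|\mathcal{M}(N)|=h(N^2-4)\ll N$, each geodesic in $\mathcal{M}(N)$ has length $\approx 2\log N$, so the total length is $\ll N\log N$, and on the fixed-area surface $X(1)$ one heuristically expects $\sim (N\log N)^2$ crossings. The main obstacle is the first step: one must verify carefully that simplicity of systoles on $X(N)$ is precisely what forces every lifted self-intersection of a modular geodesic to become a crossing of two \emph{distinct} systoles, so that the combinatorics of $\mathrm{cr}(\mathcal{C}(N))$ genuinely matches the lifted intersection count from $X(1)$. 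Once this identity is secured, the proposition follows by plugging in~\cite{JungSardari2022} up to absolute constants.
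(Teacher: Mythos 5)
Your overall route is the same as the paper's: transfer the crossing count from $X(N)$ to intersections of modular geodesics of trace $N$ on $X(1)$, pick up a multiplicative factor $[\Gamma(1):\Gamma(N)]$, and invoke Jung--Sardari. The paper phrases the transfer algebraically (extending $\Phi\otimes\Phi$ to pairs of conjugacy classes with intersecting axes, then identifying pairs up to simultaneous $\Gamma(1)$-conjugation, the number of double cosets being $[\Gamma(1):\Gamma(N)]$), while you phrase it geometrically via the covering map and fibers over intersection points; these are equivalent formulations, and your covering-space version of the upper bound $\mathrm{cr}(\mathcal{C}(N))\ll [\Gamma(1):\Gamma(N)]\cdot I(\mathcal{M}(N))$ is sound (the two-sided $\asymp$ you claim is not needed for the proposition).

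There is, however, one genuine gap in your final step. You assert that the estimate $I(\mathcal{M}(N))\ll (N\log N)^2$ ``is exactly what \cite{JungSardari2022} provides.'' It is not: Theorem 1.8 of Jung--Sardari gives the intersection count as $\asymp \bigl(h(D)\log\epsilon_D\bigr)^2$ with $D=N^2-4$. Converting this to $(N\log N)^2$ requires two further inputs: Dirichlet's class number formula $h(D)\log\epsilon_D=\sqrt{D}\,L(\chi_D,1)$, and the upper bound $L(\chi_D,1)\ll\log D$. The second input is precisely what the paper spends the second half of its proof establishing self-containedly (partial summation against the trivial character-sum bound $|s_\Delta|\leq D$, with the cutoff $\Delta=D$). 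Your heuristic justification hides this: you claim $h(N^2-4)\ll N$ follows from the class number formula together with $\log\epsilon_D\asymp\log N$, but those two facts alone only give $h(D)\asymp N\,L(\chi_D,1)/\log N$, so the bound $h(D)\ll N$ you want is \emph{equivalent} to the missing estimate $L(\chi_D,1)\ll\log N$. The gap is fillable by standard (elementary) analytic number theory, but as written your plan's last step assumes what it needs to prove.
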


\begin{proof}
For each hyperbolic $M\in \mathrm{SL}_2(\R)$ let $C_M$ denote the hyperbolic geodesic in $\mathbb{H}$ connecting the fixed points of $M$. A direct computation suffices to check that $M$ and $I-NM$ (for any $N$) have the same fixed points; writing $M=(\begin{smallmatrix} a& b \\ c&d \end{smallmatrix})$, we have
 \begin{align*}
     (I-NM) x= x \iff \frac{x-N(ax+b)}{1-N(cx+d)}=x \iff Mx = x.
 \end{align*}
 Hence we can extend $\Phi\otimes \Phi$ to a bijection between pairs $(B,B')$ for which $C_B$ and $C_{B'}$ intersect and pairs $(A,A')$ for which $C_A$ and $C_{A'}$ intersect. We consider further the bijection induced by $\Phi\otimes\Phi$ between pairs of conjugacy classes (under the action $\Gamma(1)\times \Gamma(1)$) for which there exists a pair of representatives with intersecting geodesics. To avoid overcounting, we will further identify on both sides pairs that coincide up to simultaneous $\Gamma(1)$-conjugation. It follows that the number of intersecting systoles on $X(N)$ is bounded above by the number of intersections of modular geodesics of trace $N$ times $|\Delta \setminus (\Gamma(1)\times \Gamma(1))/(\Gamma(N)\times \Gamma(N))|= [\Gamma(1):\Gamma(N)]$. The recent work of Jung and Sardari shows that the number of intersections for modular geodesics of discriminant $D=N^2-4$ is $\asymp (h(D)\log \epsilon_D)^2$ for $N$ sufficiently large; see \cite[Theorem 1.8]{JungSardari2022}. By the class number formula, this is $(N^2-4)L(\chi,1)^2$ and we could now rely on the (elementary) upper bound stated in (\ref{lower/upper bounds}). For the convenience of the reader, we include a self-contained proof. By definition, we have 
 $$
 L(\chi_D,1)=\sum_{n=1}^\infty  \left(\frac{D}{n}\right)\frac{1}{n}.
 $$
 For partial sums ($1<\Delta<\infty)$ we have the trivial bound
 $$
 \sum_{n=1}^\Delta \left(\frac{D}{n}\right)\frac{1}{n} < 1 + \log \Delta.
 $$
The Kronecker symbol $\left(\frac{D}{n}\right)$ is a nonprincipal character modulo $D$. In particular, for any $\Delta$, we have the elementary uniform bound $|s_\Delta|\leq D$, where
 $$
 s_\Delta = \sum_{n=1}^\Delta \left(\frac{D}{n}\right).
 $$
Then
 $$
\sum_{n=\Delta+1}^\infty  \left(\frac{D}{n}\right)\frac{1}{n}=\sum_{n=\Delta+1}^\infty \frac{s_n-s_{n-1}}{n} =\sum_{n=\Delta+1}^\infty \frac{s_n}{n(n+1)} -\frac{s_\Delta}{\Delta+1},
 $$
which we can bound by
 \begin{align*}
 D \int_{\Delta}^\infty \frac{dt}{t(t+1)} + \frac{D}{\Delta+1}  = D\int_{\Delta}^{\Delta+1} \frac{dt}{t} + \frac{D}{\Delta+1} \ll \frac{D}{\Delta}.
 \end{align*}
 Choosing $\Delta=D$ we conclude that $|L(\chi_D,1)|\ll \log D$.
 \end{proof}

 \begin{remark}
     Stronger estimates for $s_\Delta$ are available (e.g., the theorems of P\'olya--Vinogradov or Burgess) but do not yield a better final result. However, as noted earlier, we can replace $\log N$ by $\log\log N$ if GRH is true.
 \end{remark}
 
The results of this section show that 
$$
\liminf_{N\to\infty} \frac{\log{\rm cr}(\mathcal{C}(N))}{\log N} \leq 5
$$
and together with the second statement of Theorem \ref{crossingnumber}, proved in the previous section, this concludes the proof of Theorem \ref{crossingnumber}.

\section{Subsystems with optimal crossing number}

So far, we have seen that the crossing number of curve systems with about $g^{1+\alpha}$ curves grows at least as $g^{1+2\alpha}$. Moreover, this growth rate is asymptotically achieved by systems of systoles of surfaces associated with congruence lattices. In the following, we explain why these systems of systoles contain subsystems with smaller growth rate and asymptotically minimal crossing number.

\begin{lemma}
\label{average}
Let $\mathcal{C}$ be a family of $n$ curves on a given closed surface. Then for every $k<n$ there is a subfamily of $\mathcal{C}$ of size $k$ with crossing number at most $(k/n)^2\text{cr}(\mathcal{C})$.
\end{lemma}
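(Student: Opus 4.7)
The plan is to use a straightforward averaging (first moment) argument over all size-$k$ subfamilies of $\mathcal{C}$.

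First I would fix $k<n$ and consider the collection of all $\binom{n}{k}$ subfamilies $\mathcal{S}\subset \mathcal{C}$ of cardinality $k$. For each unordered pair $\{c_i,c_j\}\subset \mathcal{C}$, I would count how many such $\mathcal{S}$ contain both curves: this is $\binom{n-2}{k-2}$, independent of the pair. Summing the crossing numbers of all subfamilies of size $k$ thus gives
$$\sum_{\mathcal{S}} \text{cr}(\mathcal{S}) = \binom{n-2}{k-2}\sum_{i<j} i(c_i,c_j) = \binom{n-2}{k-2}\,\text{cr}(\mathcal{C}).$$

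Dividing by $\binom{n}{k}$ yields the average crossing number over size-$k$ subfamilies, namely $\frac{k(k-1)}{n(n-1)}\,\text{cr}(\mathcal{C})$. Since $k\le n$, one easily checks $\frac{k(k-1)}{n(n-1)}\le \frac{k^2}{n^2}=(k/n)^2$, so the average is at most $(k/n)^2\,\text{cr}(\mathcal{C})$. By the pigeonhole principle there exists at least one subfamily $\mathcal{S}$ of size $k$ whose crossing number is no larger than this average, which is the desired bound.

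There is really no obstacle here: the argument is pure double counting of incidences between pairs of curves and size-$k$ subsets, and the only tiny point to verify is the inequality $\frac{k(k-1)}{n(n-1)}\le (k/n)^2$, which reduces to $k\le n$. Nothing about the topology of the surface or the structure of the curves $c_i$ enters the proof; the geometric intersection numbers $i(c_k,c_l)$ are treated as abstract nonnegative weights on pairs.
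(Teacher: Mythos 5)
Your proposal is correct and follows essentially the same averaging argument as the paper: double counting pairs over all $\binom{n}{k}$ subfamilies, computing the average crossing number $\frac{k(k-1)}{n(n-1)}\text{cr}(\mathcal{C})$, and concluding by comparison with $(k/n)^2\text{cr}(\mathcal{C})$. No differences worth noting.
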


\begin{proof}
In fact, the average crossing number of subfamilies of size $k$ of $\mathcal{C}$ is precisely $$\frac{k(k-1)}{n(n-1)}\text{cr}(\mathcal{C})< (k/n)^2 \text{cr}(\mathcal{C}).$$ 
To see this, let $F$ be the family of subsets of $\mathcal{C}$ of size $k$ and note that every pair $v,w \in \mathcal{C}$ appears in precisely $n-2 \choose k-2$ members of $F$. This implies that 
$$\sum_{\mathcal{B} \in F} \text{cr}(\mathcal{B}) = {n-2 \choose k-2} \text{cr}(\mathcal{C}).$$
Since $F$ has $n \choose k$ members, the average of $\text{cr}(\mathcal{B})$ over $\mathcal{B} \in F$ is 
$$\frac{\binom{n-2}{k-2}}{\binom{n}{k}} \text{cr}(\mathcal{C}) = \frac{k(k-1)}{n(n-1)}\text{cr}(\mathcal{C}).$$
\end{proof}

Now let $0<\beta<\alpha$, and suppose there exists a family of curves of size $g^{1+\alpha}$ and crossing number at most $g^{1+2\alpha}$ on a given closed surface of genus $g$. Then there is a subfamily of size $g^{1+\beta}$ and at most $g^{1+2\beta}$ intersections. Indeed, by Lemma~\ref{average} we get a subfamily $\mathcal{B}$ of size $g^{1+\beta}$ such that 
$$\text{cr}(\mathcal{B})\leq \Big(\frac{g^{1+\beta}}{g^{1+\alpha}}\Big)^2 \text{cr}(\mathcal{C}) = g^{2\beta - 2 \alpha} \text{cr}(\mathcal{C}) \leq g^{2\beta - 2 \alpha} g^{1+2\alpha} = g^{1+2\beta}.$$

Applying this to the systems of systoles of surfaces associated with congruence lattices, we obtain curve systems of size $g^{1+\beta}$ with minimal crossing number, for all $\beta \leq 1/3$. We intend to deal with the case $\beta \geq 1/3$ in a separate article.

%\bigskip
%\noindent
%Mathematisches Institut, Universit\"at Bern, Sidlerstrasse 5, 3012 Bern, Switzerland

\smallskip
\noindent
\texttt{sebastian.baader@unibe.ch}

\smallskip
\noindent
\texttt{claire.burrin@math.uzh.ch}

\smallskip
\noindent
\texttt{luca.studer@bhf.ch}

\end{document}